\newtheorem*{theo}{Theorem}
\newcommand{\be}{\begin{equation}}
\newcommand{\ee}{\end{equation}}
\newcommand{\beas}{\begin{eqnarray*}}
\newcommand{\eeas}{\end{eqnarray*}}
\newcommand{\bea}{\begin{eqnarray}}
\newcommand{\eea}{\end{eqnarray}}
\begin{document}
\title[Euler's Limit]{Euler's Limit---Revisited }
\date{}
\author[B. Chakraborty and S. Chakraborty]{Bikash Chakraborty and Sagar Chakraborty}
\date{}
\address{Department of Mathematics, Ramakrishna Mission Vivekananda
Centenary College, Rahara, West Bengal 700 118, India. }
\email{bikashchakraborty.math@yahoo.com, bikash@rkmvccrahara.org}
\address{Department of Mathematics, Jadavpur University, Kolkata-700032, India. }
\email{sagarchakraborty55@gmail.com}
\maketitle
\footnotetext{Keywords: Euler's limit, Sandwich lemma, monotone function.}
\footnotetext{2020 Mathematics Subject Classification:  40A05, 11B83}
%\footnotetext{Accepted for Publication in The Mathematical Intelligencer}

Let $e_{n}:=\left(1+\frac{1}{n}\right)^{n}$ for $n \in \mathbb{N}$. It is well known that the sequence $(e_{n})$ is monotone increasing and bounded, hence it is convergent. The limit of this sequence is the famous Euler number $e$. Here we establish a generalization of this limit.
\begin{theo}
Let $\{ a_{n}\}$  and $\{ b_{n}\}$ be two sequences of positive real numbers such that  $a_{n}\to +\infty$ and $b_n$ satisfying the asymptotic formula $b_n\sim k\cdot a_{n}$, where $k>0$. Then
\begin{equation*}\label{1}
\lim\limits_{n\to\infty}\left(1+\frac{1}{a_{n}}\right)^{b_{n}}= e^{k}.
\end{equation*}
\end{theo}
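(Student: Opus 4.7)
The plan is to reduce the statement to the classical Euler limit by separating the base and the exponent. I would write
\[
\Bigl(1+\tfrac{1}{a_n}\Bigr)^{b_n} \;=\; \Bigl[\Bigl(1+\tfrac{1}{a_n}\Bigr)^{a_n}\Bigr]^{b_n/a_n},
\]
so that the problem splits into (i) extending the familiar limit $e_n \to e$ from integer $n$ to a real sequence $a_n \to \infty$, and (ii) handling the variable exponent $c_n := b_n/a_n$, which by the hypothesis $b_n \sim k a_n$ tends to $k$.

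For step (i), I would set $m_n := \lfloor a_n \rfloor$, so that $m_n \leq a_n < m_n + 1$ and $m_n \to \infty$. The monotonicity of $t \mapsto 1 + 1/t$ together with the monotonicity of the exponent yields the sandwich
\[
\Bigl(1 + \tfrac{1}{m_n+1}\Bigr)^{m_n} \;\leq\; \Bigl(1 + \tfrac{1}{a_n}\Bigr)^{a_n} \;\leq\; \Bigl(1 + \tfrac{1}{m_n}\Bigr)^{m_n+1}.
\]
Each bound factors as $e_{m_n}$ or $e_{m_n+1}$ times a harmless correction of the form $(1 + 1/m_n)^{\pm 1}$, and so both bounds converge to $e \cdot 1 = e$ by the integer-indexed Euler limit recalled in the preamble. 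The sandwich lemma then gives $(1+1/a_n)^{a_n} \to e$.

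For step (ii), put $x_n := (1+1/a_n)^{a_n}$, so that $x_n \to e$ and $c_n \to k$. Since $x_n > 1$ and the exponential and logarithm are continuous, $x_n^{c_n} = \exp(c_n \log x_n) \to \exp(k \log e) = e^k$, which is the desired conclusion. The main obstacle will be step (i): one has to verify that the integer-part sandwich reduces cleanly to the integer Euler sequence plus factors that trivially tend to $1$, without implicitly invoking a result equivalent to what is being proved. Step (ii) is essentially continuity of $(x,y) \mapsto x^y$ at $(e,k)$ and should be routine once step (i) is secured.
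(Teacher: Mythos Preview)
Your argument is correct, but it takes a genuinely different route from the paper. You factor the expression as $[(1+1/a_n)^{a_n}]^{b_n/a_n}$, extend the integer Euler limit to real indices via the floor-function sandwich, and then finish by continuity of $(x,y)\mapsto x^y$. The paper instead works directly with logarithms: from the elementary inequality $1-\tfrac{1}{x}<\ln x<x-1$ for $x>1$ (established by differentiating $x-1-\ln x$ and $\ln x-1+\tfrac{1}{x}$), it substitutes $x=1+\tfrac{1}{a_n}$ to get
\[
\frac{b_n}{1+a_n}\;<\;b_n\ln\Bigl(1+\tfrac{1}{a_n}\Bigr)\;<\;\frac{b_n}{a_n},
\]
and since both ends tend to $k$ by the hypothesis $b_n\sim k\,a_n$, the Sandwich Lemma gives the result in one stroke. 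Your approach has the pedagogical virtue of visibly reducing to the integer Euler sequence introduced in the preamble, at the cost of the extra floor-function bookkeeping and an appeal to continuity of the power map. The paper's approach is shorter and entirely self-contained: it never invokes the integer limit $e_n\to e$ at all, and in fact yields that classical limit as the special case $a_n=b_n=n$.
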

\begin{proof} Let $f:(1, \infty)\to \mathbb{R}$ be defined by $f(x)=x-1-\ln x$. Since $f'(x)>0$ for $x\in (1,\infty)$, thus $f$ is increasing on $(1, \infty)$. Again, for the function  $g:(1, \infty)\to \mathbb{R}$ which is defined by $g(x)=\ln x-1+\frac{1}{x}$, $g'(x)>0$ for $x\in (1,\infty)$. Thus $g$ is also increasing on $(1, \infty)$. Hence
$$1-\frac{1}{x}<\ln x<x-1 ~~~~\text{for}~~~~x>1.$$
Since $a_{n}>0$, thus $1+\frac{1}{a_{n}}>1$. Thus using the above inequality, we have
\begin{eqnarray*}\label{1}
\frac{1}{1+a_{n}}<\ln(1+\frac{1}{a_{n}})<\frac{1}{a_{n}}.
\end{eqnarray*}
Since $b_{n}>0$, we have
\begin{eqnarray*}\label{1}
\frac{b_{n}}{1+a_{n}}<b_{n}\cdot\ln(1+\frac{1}{a_{n}})<\frac{b_{n}}{a_{n}}.
\end{eqnarray*}
Since $b_n\sim k\cdot a_{n}$, using Sandwich Lemma (\cite{BS}), we have $$ \lim\limits_{n\to\infty}\left(1+\frac{1}{a_{n}}\right)^{b_{n}}= e^{k}.$$
\end{proof}
It can be seen that by choosing $a_{n}=n$ and $b_{n}=n$, we get Euler's limit. Moreover, if $\frac{b_{n}}{a_{n}}\sim 0$, then $\lim\limits_{n\to\infty}\left(1+\frac{1}{a_{n}}\right)^{b_{n}}= 1$. Also, if $\frac{a_{n}}{b_{n}}\sim 0$, then $\lim\limits_{n\to\infty}\left(1+\frac{1}{a_{n}}\right)^{b_{n}}= \infty$.
\begin{center}
\textbf{Acknowledgements}
\end{center}
  We are thankful to the anonymous Referee and Prof. Gerry Leversha for their careful reading the manuscript and necessary suggestions.
%%%%%%%%%%%%%%%%%%%%%%%%%%%%%%%%%%%%%%%%%%%%%%%%%%%%%%%%%%%%%%%%%%%%%%%%%%%%%%%%%%%%%%%%%%%%%%%%%%%%%%%%%%%%%%%%%%%%%%%%%%%%%%%%%%%%%%%%%%%%%%%%%%%%%%%%%%%%


\begin{thebibliography}{99}
\bibitem{BS} R. G. Bartle, and D. R. Sherbert, Introduction to Real Analysis (Fourth Edition), Wiley, (2014).
\bibitem{1} Ananda Mukherjee, and Bikash Chakraborty, Yet Another Visual Proof that $\pi^{e}< e^{\pi}$, Math Intelligencer, 41 (2), (2019), pp. 60.
\end{thebibliography}
\end{document}